\newtheorem{theorem}{Theorem}
\theoremstyle{definition}
\theoremstyle{remark}
\newtheorem{remark}[theorem]{Remark}
\begin{document}

    \title[Conformal scalar curvature rigidity]{Conformal scalar curvature rigidity on Riemannian manifolds}

    \author{Seongtag Kim}

    \address{ Department of Mathematics Education, Inha University, Incheon 22212, Korea and Department of Mathematics, Princeton University, NJ 08544, USA}
    \email{stkim@inha.ac.kr}



    \keywords{conformal metrics, Yamabe Problem, scalar
        curvature}


    \begin{abstract}
    Let  $(M, \bar g)$ be an $n$-dimensional complete Riemannian
    manifold.
In this paper, we considers the following conformal scalar
curvature rigidity problem: Given a compact smooth domain $\Omega$
with $\partial \Omega$,  can one find a conformal metric $g$ whose
scalar curvature $R[g]\ge R[\bar g]$ on $\Omega$ and the mean
curvature $H[g] \ge H[ \bar g]$ on $\partial \Omega$  with  $\bar
g = g$ on $\partial \Omega$?
        We prove that   $\bar g = g$  on some
        smooth domains in a general Riemannian manifold, which  is
        an extension of the previous results given by Qing and Yuan, and
        Hang and Wang.
    \end{abstract}

    \maketitle

    \section{Introduction}

 Let  $(M, \bar g)$ be an $n$-dimensional complete Riemannian manifold and $\Omega$ be a smooth domain in  $(M, g)$ with
 smooth boundary $\partial \Omega$. Denote $R  [\bar g]$  by the scalar curvature of $\bar g$
 and $H [\bar g]$   the mean curvature of $\partial \Omega$.
 In this paper, we considers the following problem: Given a compact smooth domain $\Omega$ with
$\partial \Omega$,  can one find a conformal metric $g$ whose
scalar curvature $R[g]\ge R[\bar g]$ on $\Omega$ and the mean
curvature $H[g] \ge H[ \bar g]$ on  $\partial \Omega$  with  $\bar
g = g$ on $\partial \Omega$?

For  the conformal metric $  g$ of the given metric $(M,
\bar{g})$, scalar curvature $R[g]$ and mean curvature
    $H[g]$ on the boundary of the domain change in the following ways:
    \begin{equation}\label{eq001}\begin{split}
    g &=   e^{2u}\bar g,  \quad
   R[g] = e^{-2u} \left( R[\bar{g}] - 2 \Delta u \right)  \text {
   and} \\
H[g]& = e^{-u} [H[\bar g] + \partial_\nu u ] \, \text{ when} \,
n=2,
\end{split}
\end{equation}
\begin{equation}\label{eq002}\begin{split}
g &= u^\frac 4{n-2}\bar g, R[g] = u^{-\frac{n+2}{n-2}} \left(
R[\bar{g}] u - \frac {4(n-1)}{n-2}\Delta u \right) \text { and} \\
H[g]&
 =u^{-\frac n {n-2}} \Big[H[\bar g] + \frac
{2}{n-2}\partial_\nu u \Big] \text { when} \,
 n\ge3.
 \end{split} \end{equation}
 Therefore the given condition $R[g]\ge R[\bar g]$ on $\Omega$ is
 equivalent to
\begin{equation}\label{eqq1}\begin{split}
 R[\bar{g}]e^{2u} \le  \left( R[\bar{g}] - 2 \Delta u \right) \, \text{ when} \,
 n=2 \, \text{ and}  \\
R[\bar{g}] u^{\frac{n+2}{n-2}} \le \left( R[\bar{g}] u - \frac
{4(n-1)}{n-2} \Delta u \right) \, \text{ when} \,   n\ge3.
 \end{split}  \end{equation}
The condition $H[g] \ge H[ \bar g]$  with $\bar g = g$ on
$\partial \Omega$  is
 equivalent to
\begin{equation}\label{eqq2}\begin{split}
\partial_\nu u \ge 0.
 \end{split} \end{equation}

This problem is a conformal version of Min-Oo's conjecture.
Uniqueness  and non-uniqueness of conformal metric with prescribed
scalar curvature on $\Omega$ with the mean curvature condition on
$\partial \Omega$ was studied by Escobar. He proved that on the
annulus $A_{a,b}=\{x \in R^n| 0<a< |x|<b\}$ with the Euclidean
metric $\delta_{ij}$ admits a conformal metrics of the form
$u^{4/(n-2)}(|x|)\delta_{ij}  $ that there exist several metrics
with the same constant scalar curvature and the same constant mean
curvature on the boundary if $\frac b a$ is big enough
\cite{es90}. Therefore we can not expect to get a conformal scalar
curvature rigidity to our question in general. However, if $\frac
b a$ is small and $\bar g = g$ on $|x|=a$ and $|x|= b$, our result
implies that $\bar g = g$ on $A_{a,b}$.
 Hang and Wang obtained the
conformal deformations rigidity of metrics on the hemisphere
$D=S^+$ in the standard sphere $(S^n,\bar g)$ where $\bar g=g_0$.
 They proved that (\ref{eqq1}) and $\bar g = g$ on
$\partial D$ imply $\bar g = g$ on $D$ \cite{H-W1}. This result
was recently generalized to the domains in the vacuum static
spaces by Qing and Yuan.
    \begin{theorem}\label{QYthm}\cite[Theorem 5.1]{Q-Y2}
    Let $(M^n,\bar{g}, f)$ be a complete $n$-dimensional static space with $R_{\bar{g}} > 0$ ($n \geq 2$). Assume $\Omega^+\equiv\{p\in M| f(p)>0\}$
    is a pre-compact subset in $M$.
    Then, if a metric $g \in [\bar{g}]$ on $M$ satisfies that
    \begin{itemize}
        \item $R[g] \geq R[\bar{g}]$ in $\Omega^+$,
        \item $g$ and $\bar{g}$ induced the same metric on $\partial \Omega^+$, and
        \item $H[g] = H[\bar{g}]$ on $\partial \Omega^+$,
    \end{itemize}
    then $g =\bar{g}$.
\end{theorem}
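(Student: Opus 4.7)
The plan is to use the static potential $f$ itself as a test function, exploiting the fact that on a static space $f$ is an eigenfunction of the conformal Laplacian at precisely the critical Yamabe exponent; this aligns the linearization of the rigidity problem with the static structure and lets us close the argument by convexity.

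First I would translate the hypotheses into a PDE for the conformal factor. For $n \ge 3$, writing $g = u^{4/(n-2)}\bar g$ with $u > 0$, the condition that $g$ and $\bar g$ induce the same metric on $\partial\Omega^+$ gives $u \equiv 1$ on $\partial\Omega^+$, while combining $H[g] = H[\bar g]$ with (\ref{eq002}) forces $\partial_\nu u = 0$ on $\partial\Omega^+$. The scalar curvature inequality in (\ref{eqq1}) becomes
$$L_{\bar g}\,u := -\tfrac{4(n-1)}{n-2}\Delta_{\bar g} u + R[\bar g]\,u \;\geq\; R[\bar g]\,u^{(n+2)/(n-2)}.$$
Setting $w = u-1$ (so that $w = 0$ on $\partial\Omega^+$), this rewrites as $L_{\bar g} w \geq R[\bar g]\,\phi(w)$, where $\phi(t) := (1+t)^{(n+2)/(n-2)}-1$ is strictly convex on $(-1,\infty)$ with $\phi(0)=0$ and $\phi'(0) = (n+2)/(n-2)$.

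Next I would extract the key identity from the static structure. Taking the trace of the static equation $\nabla^2_{\bar g} f - (\Delta_{\bar g} f)\bar g - f\,\mathrm{Ric}[\bar g] = 0$ yields $\Delta_{\bar g} f = -\frac{R[\bar g]}{n-1}\,f$, whence
$$L_{\bar g} f \;=\; \tfrac{n+2}{n-2}\,R[\bar g]\,f \qquad \text{on } \Omega^+,$$
with $f > 0$ in $\Omega^+$ and $f = 0$ on $\partial\Omega^+$ by definition. Multiplying $L_{\bar g} w \ge R[\bar g]\,\phi(w)$ by $f \ge 0$, integrating over $\Omega^+$, and applying Green's identity, the boundary terms $f\,\partial_\nu w$ and $w\,\partial_\nu f$ both vanish because $f = 0 = w$ on $\partial\Omega^+$. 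Substituting the eigenfunction identity then produces
$$\tfrac{n+2}{n-2}\,R[\bar g]\int_{\Omega^+} f\,w\,dV_{\bar g} \;\geq\; R[\bar g]\int_{\Omega^+} f\,\phi(w)\,dV_{\bar g}.$$

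Finally I would close by convexity: dividing by $R[\bar g] > 0$ and rearranging gives
$$\int_{\Omega^+} f\,\bigl[\phi(w)-\tfrac{n+2}{n-2}\,w\bigr]\,dV_{\bar g} \;\leq\; 0,$$
while the tangent-line inequality for the strictly convex $\phi$ at $0$ forces the bracket to be pointwise non-negative, with equality only at $w = 0$. Since $f > 0$ in $\Omega^+$, this pins $w \equiv 0$, i.e.\ $g = \bar g$ on $\overline{\Omega^+}$. The case $n = 2$ is handled in parallel using $g = e^{2u}\bar g$, convex nonlinearity $\phi(u) = e^{2u} - 1$, and the two-dimensional trace identity $\Delta_{\bar g} f = -R[\bar g]\,f$. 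The hard part, I expect, is the \emph{simultaneous} vanishing of both Green boundary terms: it is precisely the coincidence of $f \equiv 0$ on $\partial\Omega^+$ with $u \equiv 1$ on $\partial\Omega^+$ that lets the static potential play the role of a compatible test function, and the whole argument hinges on this; a subsidiary technical point is ensuring enough regularity of the level set $\{f = 0\}$ (guaranteed by $\nabla f \ne 0$ there) to legitimize the integration by parts.
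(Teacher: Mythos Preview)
Your argument is correct. Note, however, that the paper does not supply its own proof of this theorem; it is quoted from \cite{Q-Y2}, and the paper only records the two properties of the lapse function that Qing and Yuan exploit, namely (\ref{lapse-equ}) and (\ref{bdy-cond}). The method actually demonstrated in this paper---in the proof of Theorem~5, which the author explicitly says applies ``the techniques of \cite{Q-Y2}''---is different from yours: there one writes $v=u-1=f\beta$ (with $f$ replaced by the first Dirichlet eigenfunction $u_1$), derives the drift inequality $0\ge \Delta\beta+(A(x)-\lambda)\beta+\tfrac{\nabla u_1}{u_1}\cdot\nabla\beta$, and then runs a pointwise maximum-principle argument together with the Hopf lemma and the boundary condition $\partial_\nu u=0$ to force $v\equiv 0$.

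Your route---pair the inequality $L_{\bar g}w\ge R[\bar g]\phi(w)$ against the eigenfunction identity $L_{\bar g}f=\tfrac{n+2}{n-2}R[\bar g]f$ via Green's formula, then invoke strict convexity of $\phi$---is an integral substitute for that maximum-principle machinery. It is shorter, and it has a genuine bonus: because both Green boundary terms vanish already from $f=0=w$ on $\partial\Omega^+$, you never actually use $\partial_\nu u=0$, so the hypothesis $H[g]=H[\bar g]$ is redundant in your proof (mirroring the Hang--Wang hemisphere result). The maximum-principle approach, by contrast, is more portable: it localizes to the set $\{u<1\}$ and only needs $A(x)\le \lambda$ there, which is why it adapts to the non-static settings of Theorems~\ref{Kim-1} and 5 where no global eigenfunction identity like $L_{\bar g}f=\tfrac{n+2}{n-2}R[\bar g]f$ is available. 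One small wording point: your phrase ``dividing by $R[\bar g]>0$'' is harmless here (on a vacuum static space $R[\bar g]$ is constant), but the inequality $\int_{\Omega^+}R[\bar g]\,f\,[\phi(w)-\tfrac{n+2}{n-2}w]\,dV\le 0$ already suffices pointwise without that division.
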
 \noindent
For the proof of Theorem \ref{QYthm}, they used the existence of
the lapse functions $f$ with the following  properties:
\begin{equation}\label{lapse-equ}
- \Delta f - \frac {R[\bar g]}{n-1} f = 0 \text{ and } f > 0
\text{ in $\Omega^+$} \end{equation} and
\begin{equation}\label{bdy-cond}
\nabla f\neq 0  \, \text { at} \,  \partial\Omega^+ = \{x\in M^n|
f(x) = 0\},
\end{equation}
where  $\Omega^+$ is the maximal subset where the conformal
rigidity holds. The existence of lapse function comes from the
vacuum static space $(M,g)$ and  (\ref{bdy-cond}) holds for $f$
(see: \cite[Theorem 1]{F-M}).  In this paper, we extend the
previous conformal scalar curvature rigidity results to the
domains in a   general Riemannian manifold with the conformal
invariant.

\section{Conformal Rigidity of Scalar curvature}\label{Conformal_Rigidity_Section}

Let $(M, g)$ be a complete Riemannian manifold  of dimension $n
\ge 3$ with scalar curvature $R[ g]$. The Sobolev constant  $Q(M,
g)$ of  $(M, g)$ and $Q(\Omega, g) $ of a smooth domain
$\Omega\subset (M, g) $ are defined by
$$Q(M, g)\equiv \inf_{0\neq u\in C^{\infty}_0 (M)}  { { {     \int_M |\nabla u|^2+{{(n-2)}\over {4(n-1)} } R_g u^2 dV_g}
                  \over {\left( \int_M |u|^{   {2n}/(n-2)} dV_g
                  \right)^{(n-2)/n}}} }  $$ and

$$Q(\Omega, g)\equiv \inf_{0\neq u\in C^{\infty}_0 (\Omega)}
{ { {     \int_M |\nabla u|^2+{{(n-2)}\over {4(n-1)} } R_g u^2 dV_g}
                  \over {\left( \int_M |u|^{   {2n}/(n-2)} dV_g
                  \right)^{(n-2)/n}}} }. $$

Note that $Q(M, g)$ and $Q(\Omega, g)$ are conformal invariant and
$Q(\Omega, g) \ge Q(M, g)$. There are domains in a complete
Riemannian manifolds  with positive Sobolev constant.  For
example, any simply connected domain in a complete locally
conformally flat manifold has positive  Sobolev constant
\cite{sy}. It is known that for any smooth domain $\Omega \subset
(M, g)$, $Q(\Omega, g) \le Q(S^n, g_0)$ where $(S^n, g_0)$ is the
standard sphere (see \cite {au}). Using $Q(\Omega, g)$ and $Q(M,
g)$, we obtain conformal rigidity phenomena of scalar curvature.
Let $R^+(x)=\sup(0, R[\bar g](x))$.
\begin{theorem}\label{Kim-1}
    Let $(M^n,\bar{g})$ be a complete  Riemannian $n$-manifold with
    scalar curvature $R[\bar g]$ ($n \geq 3$) and
 $\Omega$ be a smooth domain in  $(M^n,\bar{g})$ with positive $Q(\Omega, \bar
 g)>0$. Assume that
 $ {\frac {(n+2)} {4(n-1)} } \Big[ \int_{\Omega }
                     |R^+| _{\bar g} ^{\frac n 2}  dV_{\bar g}\Big]^{\frac 2n }< Q(\Omega, \bar g)$.
    Then, if a conformal metric $g \in [\bar{g}]$ on $M$ satisfies that
    \begin{itemize}
        \item $R[g] \geq R[\bar{g}]$ in $\Omega$,
        \item $g$ and $\bar{g}$ induced the same metric on $\partial \Omega$, and
        \item $H[g] \ge H[\bar{g}]$ on $\partial \Omega$,
    \end{itemize}
    then $g =\bar{g}$.
\end{theorem}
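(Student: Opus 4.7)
My plan is to pass to the conformal factor $u > 0$ with $g = u^{4/(n-2)} \bar g$ and work in two stages: first I will show $u \ge 1$ on $\Omega$ via a Sobolev-type truncation estimate whose constants match the smallness hypothesis exactly, then I will upgrade to $u \equiv 1$ by the Hopf boundary-point lemma, using the sign of $\partial_\nu u$. Setting $\alpha := 4(n-1)/(n-2)$, $p := (n+2)/(n-2)$, and $q := 2n/(n-2)$, the hypothesis $R[g] \ge R[\bar g]$ reads $-\alpha \Delta_{\bar g} u + R[\bar g]\, u \ge R[\bar g]\, u^p$ in $\Omega$ by (\ref{eq002}), while the two boundary conditions translate to $u = 1$ and $\partial_\nu u \ge 0$ on $\partial\Omega$.

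For the first stage I will test the PDE inequality against $w := (1-u)^+ \in H^1_0(\Omega)$. Integration by parts (boundary terms vanish because $w = 0$ on $\partial\Omega$), together with the identity $\nabla w \cdot \nabla u = -|\nabla w|^2$ on the support $\{u<1\}$, should yield
\[
\alpha \int_\Omega |\nabla w|^2\, dV_{\bar g} \le \int_\Omega R[\bar g]\, w\,(u - u^p)\, dV_{\bar g}.
\]
On $\{u<1\}$ the tangent-line convexity bound $u^p \ge 1 + p(u-1)$ gives $u - u^p \le (p-1)\, w$, and since $R[\bar g] \le R^+$ on the support of $w$, Hölder with exponents $n/2$ and $n/(n-2)$ will produce
\[
\alpha \int_\Omega |\nabla w|^2\, dV_{\bar g} \le (p-1)\,\Big[\int_\Omega |R^+|^{n/2}\, dV_{\bar g}\Big]^{2/n}\, \|w\|_{L^q(\Omega,\bar g)}^2 .
\]
Independently, the Sobolev inequality applied to $w$, together with the same Hölder estimate on $\int R[\bar g]\, w^2$, will give
\[
\alpha\, Q(\Omega, \bar g)\, \|w\|_{L^q}^2 \le \alpha \int_\Omega |\nabla w|^2\, dV_{\bar g} + \Big[\int_\Omega |R^+|^{n/2}\, dV_{\bar g}\Big]^{2/n}\, \|w\|_{L^q}^2.
\]
Substituting the first bound into the second will collapse to
\[
\alpha\, Q(\Omega, \bar g)\, \|w\|_{L^q}^2 \le p\, \Big[\int_\Omega |R^+|^{n/2}\, dV_{\bar g}\Big]^{2/n}\, \|w\|_{L^q}^2;
\]
since $p/\alpha = (n+2)/(4(n-1))$, the smallness hypothesis will force $\|w\|_{L^q} = 0$, hence $u \ge 1$ on $\Omega$.

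For the second stage, $v := u - 1 \ge 0$ vanishes on $\partial\Omega$ and satisfies the linear elliptic inequality $-\alpha \Delta v - R[\bar g]\,\sigma(x)\, v \ge 0$, where $\sigma(x) := (u^p - u)/(u-1)$ extends continuously by $p - 1$ at $u = 1$ and is bounded on the compact set $\bar\Omega$. The strong maximum principle together with the Hopf boundary-point lemma applied to this inequality (with bounded zeroth-order coefficient, no sign assumption) will force either $v \equiv 0$ in $\Omega$ or $\partial_\nu v(x_0) < 0$ at every $x_0 \in \partial\Omega$. Because $\partial_\nu v = \partial_\nu u \ge 0$, only $v \equiv 0$ is admissible, and thus $g = \bar g$.

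The hard part will be matching the sharp constant: the convexity-based bound contributes exactly $p - 1$, the Sobolev absorption of $\int R\, w^2$ contributes an additional $+1$, and the two combine to precisely $p = (n+2)/(n-2)$, which is exactly the constant sitting in the hypothesis $\frac{n+2}{4(n-1)}\|R^+\|_{n/2} < Q(\Omega, \bar g)$. Any cruder tangent-line estimate would lose this sharpness and leave the smallness condition unable to close the argument. The concluding Hopf step is routine once $u \ge 1$, the only mild subtlety being that $R[\bar g]\,\sigma(x)$ has no definite sign, so one needs the form of Hopf's lemma valid for bounded coefficients.
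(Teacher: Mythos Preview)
Your proposal is correct and follows essentially the same route as the paper: both test the PDE inequality against $v=u-1$ (equivalently your $w=(1-u)^+$) on the set $\{u<1\}$, use the tangent-line convexity bound $u^p\ge 1+p(u-1)$ (which the paper packages as $A(x)\le \tfrac{1}{n-1}R^+$ on $\Omega_1$), combine with the Sobolev inequality and H\"older to produce exactly the constant $\tfrac{n+2}{4(n-1)}$, and then finish with the Hopf boundary-point lemma. Your use of the truncation $w\in H^1_0(\Omega)$ is a slightly cleaner way to handle the possibly irregular level set $\{u=1\}$ than the paper's direct integration over $\Omega_1$, but the two arguments are otherwise the same.
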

\begin{proof}
Since $ R[\bar g] \le R[g]$ we have (\ref{eqq1}).  Take $v=u-1$,
$\Omega_1=\{x \in \Omega | u(x) < 1\}$  and $\Omega_2=\{x \in
\Omega | u(x) \ge 1\}$. We shall show that $\Omega_1=\phi$. If
then, $u>1$ on $\Omega$ by the maximum principle. Since the mean
curvature at the boundary is increasing, $\partial_\nu u\ge 0$ on
$\partial \Omega$. However, this contradicts to the strong maximum
principle since $u>1$ on $\Omega$ and $u=1$ on the $\partial
\Omega$  (see \cite{Q-Y2}). To show that $\Omega_1=\phi$, we let
\begin{equation}\label{AAA} A(x)=\frac{\frac {n-2}{4(n-1)}
R[\bar{g}] u(x) \left(u(x)^{\frac{4}{n-2}} - 1\right)}{u(x)-1}.
\end{equation} The given conditions imply that
\begin{equation}\label{q11}
- \Delta v - A(x) v \geq 0
\end{equation}
 on $\Omega$, $v=0$ on  $\partial \Omega$ and $\partial_\nu v=0$ on $\partial \Omega$.
Note that $A(x)\le \frac 1 {n-1} R^+[\bar{g}]$ on $\Omega_1$.
Multiplying $v$ on  (\ref{q11}) on $\Omega_1$,
\begin{equation}\label{q12}
\int_{\Omega_1} |\nabla v|^2- A(x) v^2 dV_{\bar g} \le 0.
\end{equation}
We may  consider $v$ as a function defined on $\Omega$ by extending the domain.
By using the Sobolev constant $Q(\Omega, g)$ of $\Omega$,
\begin{equation}\label{qq11} \begin{split}
 Q(\Omega, \bar g)  {\left( \int_\Omega |v|^{   \frac{2n}{n-2}}
dV_{\bar g}
                    \right)^ {\frac{n-2}n}    }
                   &   \le  \int_{\Omega _1} |\nabla v|^2+{{(n-2)}\over {4(n-1)} } R_{\bar g} v^2 dV_{\bar g} \\
                  & \le       \int_{\Omega _1} \Big( A(x)+{{(n-2)}\over {4(n-1)} } R_{\bar g} \Big)v^2 dV_{\bar g}\\
                  & \le  \Big( {\frac 1 {n-1}+\frac{(n-2)} {4(n-1)} }\Big)     \int_{\Omega _1}  R^+_{\bar g} v^2 dV_{\bar g}\\
                   & \le    {\frac {(n+2)} {4(n-1)} }   \int_{\Omega _1} |R^+| _{\bar g} v^2 dV_{\bar g}\\
                    & \le         {\frac {(n+2)} {4(n-1)} }  \Big[ \int_{\Omega _1}
                      |R^+| _{\bar g} ^{\frac n 2}  dV_{\bar g}\Big] ^{\frac 2n }
                     \Big[ \int_{\Omega _1} |v|^{\frac {2n}{n-2}} dV_{\bar g}\Big]^{\frac{n-2}n},
                     \end{split} \end{equation}
                     where (\ref{q12}) is used.
  Therefore    $ {\frac {(n+2)} {4(n-1)} } \Big[ \int_{\Omega _1}
                      |R^+| _{\bar g} ^{\frac n 2}  dV_{\bar g}\Big]^{\frac 2n }< Q(\Omega, g)$
                     implies $v\equiv0$ on ${\Omega _1}$.
\end{proof}

\begin{remark} Let  $\Omega$ be
 a simply connected domain  in a locally conformally flat
manifold $(M, \bar g)$, then  $Q(\Omega, g)= Q(S^n, g_0)$. If $
{\frac {(n+2)} {4(n-1)} } \Big[ \int_{\Omega }
                      |R^+| _{\bar g} ^{\frac n 2}  dV_{\bar g}\Big]^{\frac 2n }< Q(S^n, g_0)$,
                      then Theorem \ref{Kim-1} holds
                      for $\Omega$. If $(M, g)$ is locally conformally flat with constant positive
scalar curvature, then the rigidity holds for $\Omega$ with
sufficiently small $|\Omega|$.
\end{remark}
\begin{remark}
When $(M^n,\bar g)$ is a compact Einstein manifold with positive
scalar curvature, $Q(M^n,\bar g)={{(n-2)}\over {4(n-1)} } R[\bar
g] [Vol(M,g)]^{\frac 2n}$ \cite[page 48]{He}. Since $Q(M, \bar
g)\le Q(\Omega, \bar g)$,  Theorem \ref{Kim-1} holds for any
smooth domain $\Omega$ with $|\Omega|< [\frac{n-2}{n+2}]^{\frac
n2}[Vol(M,g)]$. \end{remark} For $n \geq 2$, let $(M^n,\bar{g})$
be a Riemannian space with positive scalar curvature
 $R[\bar{g}] > 0$. Next  we prove that for each given point $p$ there exist
 domain $D \ni p $ in a general manifold, on which conformal scalar curvature rigidity
 holds by  applying the techniques of \cite{Q-Y2}.
For a domain $ D \subset (M, \bar{g}) $, we let $$R(\bar{g}, D)=
\sup\limits_{\{p\in D\}}R[\bar{g}](p)$$ and  $\lambda_1(D)$ be the
1-st nonzero eigenvalue of domain $D$ with Dirichlet condition
with respect to the metric $\bar{g}$, i.e.,
 $$\lambda_1(D)=\inf_{u\in H}\frac{\int_D |\nabla u|^2 dV_{\bar{g}}} {\int_D u^2 dV_{\bar{g}}}$$
 where $u\in H={H}_0^{1,2}({D})$. It is known that for a given point $p\in (M,
 g)$, we can find $D\ni p$ with sufficiently large $\lambda_1(D)$.
\begin{theorem}
Let $(M^n,\bar{g})$ be a complete $n$-dimensional Riemannian space
with $R_{\bar{g}} > 0$ ($n \geq 2$). Assume $D$ is a smooth
pre-compact subset in $M$ with $\lambda_1(D)> \frac{R(\bar{g},
D)}{n-1}  $. Then, if a metric $g \in [\bar{g}]$ on $M$ satisfies
that
\begin{itemize}\label{eigen}
\item $R[g] \geq R[\bar{g}]$ in $D$, \item $g$ and $\bar{g}$
induced the same metric on $\partial D$, and \item $H[g] =
H[\bar{g}]$ on $\partial D$,
\end{itemize}
then $g =\bar{g}$.
\end{theorem}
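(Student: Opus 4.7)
The plan is to mirror the proof of Theorem~\ref{Kim-1}, but to replace the Sobolev inequality by the variational characterization of $\lambda_1(D)$; as a bonus this handles $n=2$ and $n\geq 3$ uniformly. First I would rewrite the curvature hypothesis (\ref{eqq1}) as a linear differential inequality $-\Delta v \geq A(x)v$ on $D$. For $n\geq 3$ take $u$ to be the conformal factor in (\ref{eq002}), set $v = u-1$, and let $A(x)$ be the coefficient in (\ref{AAA}); for $n=2$ take $u$ as in (\ref{eq001}), set $v = u$, and let $A(x) = \tfrac{1}{2}R[\bar g]\,(e^{2u}-1)/u$, extended continuously by $R[\bar g]$ at $u=0$. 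The Dirichlet condition $g=\bar g$ on $\partial D$ yields $v=0$ on $\partial D$, and $H[g]=H[\bar g]$ together with (\ref{eqq2}) yields $\partial_\nu v = 0$ on $\partial D$.

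Next I would bound $A(x)$ pointwise on $\Omega_1 = \{v<0\}\cap D$. An elementary convexity computation gives $A(x) \leq R[\bar g](x)/(n-1)$: for $n\geq 3$ this reduces to $u(u^{4/(n-2)}-1)/(u-1) \leq 4/(n-2)$ for $0<u<1$, verified by the mean value theorem (with separate checks for $n\leq 6$ and $n>6$ owing to the sign of $4/(n-2)-1$); for $n=2$ it reduces to monotonicity of $(e^{2u}-1)/u$, which is immediate. Since $R_{\bar g}>0$ on $D$, we obtain $0 \leq A(x) \leq R(\bar g,D)/(n-1)$ on $\Omega_1$.

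With these estimates in hand, I would test the inequality against $v$ on $\Omega_1$. The boundary of $\Omega_1$ lies in $\{v=0\}\cup(\Omega_1\cap\partial D)$, on both of which $v$ vanishes, so integration by parts yields $\int_{\Omega_1}|\nabla v|^2\,dV_{\bar g} \leq \int_{\Omega_1}A(x)v^2\,dV_{\bar g}$. Extending $v$ by zero to a function in $H_0^{1,2}(D)$ and invoking the variational definition of $\lambda_1(D)$ gives
\begin{equation*}
\lambda_1(D)\int_{\Omega_1} v^2\,dV_{\bar g}
\;\leq\; \int_{\Omega_1}|\nabla v|^2\,dV_{\bar g}
\;\leq\; \frac{R(\bar g,D)}{n-1}\int_{\Omega_1} v^2\,dV_{\bar g},
\end{equation*}
and the strict inequality $\lambda_1(D) > R(\bar g,D)/(n-1)$ forces $v\equiv 0$ on $\Omega_1$, hence $\Omega_1=\emptyset$.

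Finally I would conclude as in Theorem~\ref{Kim-1}: $v\geq 0$ on $D$ with $-\Delta v \geq A(x)v \geq 0$, $v = 0$ on $\partial D$, and $\partial_\nu v = 0$ on $\partial D$. The strong minimum principle leaves either $v\equiv 0$ or $v>0$ in the interior, and the Hopf boundary point lemma rules out the latter, so $v\equiv 0$ and $g=\bar g$. The main obstacle is the pointwise bound on $A(x)$ for large $n$, where $4/(n-2)<1$ forces a case split in the MVT argument; once that is in hand, the remainder of the proof is a clean substitution of $\lambda_1(D)$ for the Yamabe-type Sobolev constant used in Theorem~\ref{Kim-1}.
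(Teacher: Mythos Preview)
Your argument is correct, but it is genuinely different from the paper's. The paper does not use the variational characterization of $\lambda_1(D)$; instead it takes the first Dirichlet eigenfunction $u_1>0$ on $D$, writes $v=u_1\beta$, and derives the inequality
\[
0\ \ge\ \Delta\beta + (A(x)-\lambda_1)\beta + \frac{\nabla u_1}{u_1}\cdot\nabla\beta
\]
in the interior. A pointwise maximum principle then excludes a negative interior minimum of $\beta$ (since $A(x)\le R(\bar g,D)/(n-1)<\lambda_1$ where $v<0$), and L'H\^opital plus $\partial_\nu v=0$ give $\beta=0$ on $\partial D$; hence $v\ge 0$, and the Hopf lemma finishes as in your last paragraph. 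Your integral route sidesteps the mildly delicate points in that argument---the boundary regularity of $\beta=v/u_1$ and the singular drift $\nabla u_1/u_1$---and, as you note, treats $n=2$ on the same footing as $n\ge 3$ (the paper's proof is written only for $g=u^{4/(n-2)}\bar g$). Conversely, the paper's substitution $v=u_1\beta$ is the direct analogue of the Qing--Yuan use of the lapse function $f$ in Theorem~\ref{QYthm}, with $u_1$ playing the role of $f$; that structural parallel is what their method buys.
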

\begin{proof}
  Let $g=u^{\frac 4{n-2}} \bar{g}$.
    To prove the rigidity on  the domains in a general Riemannian
    manifold,
    we construct a positive smooth function  on a suitable domain $D$
    with the  properties similar to (\ref{lapse-equ}, \ref{bdy-cond}).
    For this, we take any smooth domain $D \subset \subset M$ with  $\frac {R(\bar{g},D)}{n-1}  <\lambda_1(D) $
    and  the eigenfunction $u_1$ with  $\Delta u_1+\lambda u_1=0$ on $D$.
Take $v=u-1$.   Since $u_1$ is positive on $D$, we can express $
v= u_1 \beta$ with some function $\beta$ on $D$. From (\ref{q11}),
 \begin{equation}\label{eq12}\begin{split}
    0\ge & \Delta v+ A(x) v \\ = & \triangle (u_1 \beta)+ A(x) u_1 \beta \\
    = & u_1 \triangle \beta+\Delta u_1 \beta+\nabla u_1 \cdot \nabla
\beta +A(x) u_1 \beta  \\
   = & u_1\triangle \beta-\lambda  u_1 \beta+ \nabla u_1 \cdot \nabla \beta +A(x) u_1
      \beta. \\
 \end{split} \end{equation}
 Since $u_1>0$ on $D$,
    \begin{equation}\label{eq13}
    \begin{split}
     0\ge &\triangle \beta -\lambda   \beta+\frac{\nabla u_1}{u_1}\cdot\nabla\beta +A(x)  \beta \\
     0\ge &\triangle \beta+(A(x)-\lambda)   \beta+\frac{\nabla u_1}{u_1}\cdot\nabla\beta.  \\
      \end{split} \end{equation} Using L'hospital's rule,
 $\beta= 0$ on $\partial D$. If
     there exists a minimum point $p\in D$ with $\beta(p)<0$ and $\nabla\beta(p)=0$,
    then  $\triangle \beta(p)<0$ since   $A(x)\le
    \lambda_1(D)$ if $v\le 0$. This contradicts to the Maximum
    principle (see \cite{C-L}).
Therefore $v\ge 0$ on $D$. Then by the Maximum principle again, it
does not satisfy the boundary condition $\partial_\nu u = 0 $ on
$\partial D$ if $v$ is not identically zero on $D$.
\end{proof}

Note that on a standard hemisphere $S^+$, $R[\bar{g}]=n (n-1)$
$\lambda_1(S^+)=n$ (see \cite{Re}), which provides maximal domain.
Any smaller domain than the hemisphere $D \subset \subset S^+$
satisfies $\lambda_1(D)>n$, on which  Theorem \ref{eigen} holds.

\bibliographystyle{amsplain}

\end{document}